 \newtheorem{theorem}{Theorem}[section]
\newtheorem{lemma}[theorem]{Lemma}
\newtheorem{cor}[theorem]{Corollary}
\newtheorem{definition}[theorem]{Definition}
\newtheorem{remark}[theorem]{Remark}
\newtheorem{conjecture}[theorem]{Conjecture}
\newtheorem{note}[theorem]{Note}
 \numberwithin{equation}{section}
\begin{document}
\title{Vinogradov's Conjecture and Beyond}
\author{Shivarajkumar}
\address{Department of Mathematics\\
The National Institute of Engineering\\
Mysuru, India.}
\email{shivarajmacs@gmail.com}
\subjclass[2010]{11A07, 11B75}
\begin{abstract}
In this paper, if prime $p\equiv3\pmod 4$ is sufficiently large then we prove an upper bound on the number of occurences of any arbitrary pattern of quadratic 
residues and nonresidues of length $k$ as $k$ tends to $\lceil \log_2 p\rceil$.
As an immediate consequence, it proves that, there exist a constant $c$ 
such that, the least nonresidue for such primes is at most $c\lceil \log_2 p\rceil$.
\end{abstract}
\maketitle
\section {Introduction}
%\begin{definition}
An integer $r$ is called a quadratic residue modulo prime $p$, 
if there exists an integer $x$ such that
$$x^2\equiv r\pmod p.$$
Otherwise, $r$ is called a nonresidue modulo $p$. 
%\end{definition}
\par There are two interesting problems which dominate the theory of distribution of quadratic residues and nonresidues.
\begin{enumerate}
 \item If $R$ (resp. $N$) is the maximum length of consecutive quadratic residues
 (resp. nonresidues) modulo $p$, then what is $R$(resp. $N$)?
%In 1918, P\'{o}lya \cite{polya} and Vinogradov \cite{vinog} proved 
 %that, $N\ll\sqrt{p}$log$p$. In 1932, Brauer showed that for $p\equiv 3(mod\ 4)$, 
%$R=N<\sqrt{p}$. In 1952, Davenport and Erd\"{o}s \cite{dev}, 
%showed that $N \ll \sqrt{p}$.
%In 1957, Burgess \cite{berg} proved that $N \ll p^{1/4}$log$p$ and in 1971, Hudson \cite{hudsonn} proved  that $N\leq \sqrt{p}+(3/4)\sqrt{2}p^{1/4}+2$.
%On the other hand, if $p=13$, we have $N=4>\sqrt{13}$, because, 5,6,7 and 8 are all non-residues of 13. Schur conjectured that $N<\sqrt{p}$ 
%if $p$ is large enough. In 1977, Hudson \cite{hudson} proved the Schur conjecture; moreover he believes that $p=13$ is the only exception. 
%In 2003, this was proved by Hummel \cite{pat}. In 2016, we have improved
%the Hummel's bound for $p\equiv 3(mod\ 4)$ \cite{shiv}.\\
\item 
For each prime $p$, let $n(p)$ denote the least natural number that is not a quadratic 
residue modulo $p$. Then what is $n(p)$?
\end{enumerate}
Much of the effort has been devoted to answer 
the second probelm. Gauss proved the first nontrivial bound
on this problem: he proved that if $p\equiv 1 \pmod 8$, then
the least nonresidue is less than $2\sqrt{p}+1$.

Vinogradov \cite{vinogr} established the asymptotic bound  $$n(p)\ll p^{\frac{1}{\sqrt{2e}}}\log^2 p$$
for all primes and made  the following conjecture.
\begin{conjecture}
For any fixed $\epsilon >0$, we have $n(p)\ll p^{\epsilon}$. 
\end{conjecture}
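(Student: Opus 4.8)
The plan is to bound the least nonresidue $n(p)$ uniformly over all primes by converting the assertion ``$n(p)$ is large'' into the assertion ``a short initial character sum exhibits almost no cancellation'', and then contradicting the latter with a nontrivial short-interval estimate for $\left(\tfrac{\cdot}{p}\right)$. The guiding principle is that every step must be insensitive to the residue class of $p$ modulo $4$, so that the argument treats $p\equiv1$ and $p\equiv3\pmod4$ on the same footing; in particular the antisymmetry $\left(\tfrac{-1}{p}\right)=-1$, available only when $p\equiv3\pmod4$, must never be invoked.

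First I would record the standard reduction that $N:=n(p)$ is prime: if $N=ab$ with $1<a,b<N$, then $a$ and $b$ are residues, forcing $N$ to be a residue, a contradiction. Hence every prime $q<N$ is a quadratic residue, and therefore so is every $N$-smooth integer (a product of primes below $N$). Writing $\chi=\left(\tfrac{\cdot}{p}\right)$ and letting $\Psi(x,N)$ count the $N$-smooth integers in $[1,x]$, for any $x<p$ the smooth integers contribute $+1$ each while the remaining integers contribute at least $-1$ each (none is divisible by $p$ since $x<p$), giving
\begin{equation}
\sum_{n\le x}\chi(n)\ \ge\ \Psi(x,N)-\bigl(x-\Psi(x,N)\bigr)\ =\ 2\Psi(x,N)-x .
\end{equation}
This inequality uses nothing about $p\bmod4$, so it is valid for every prime.

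Next I would oppose this lower bound with an upper bound for the same sum. P\'olya--Vinogradov gives $\bigl|\sum_{n\le x}\chi(n)\bigr|\ll\sqrt p\,\log p$, and the Burgess estimates give $\bigl|\sum_{n\le x}\chi(n)\bigr|\ll x^{1-1/r}p^{(r+1)/(4r^2)+o(1)}$ for each fixed $r$; both are uniform in the class of $p$ modulo $4$. Substituting $x=N^{u}$ and the Dickman asymptotic $\Psi(x,N)\sim\rho(u)\,x$ into the inequality above makes its left-hand side $\sim(2\rho(u)-1)\,x$; choosing $u$ with $\rho(u)>\tfrac12$ and then taking $x$ as large as the chosen character-sum estimate permits before its bound swamps $x$ produces an upper bound for $N=n(p)$ valid for all primes, recovering and refining the cited estimate $n(p)\ll p^{1/\sqrt{2e}}\log^2p$.

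The hard part will be closing the gap between any such power-saving bound and the conjectured $n(p)\ll_\epsilon p^\epsilon$. The inequality above turns the problem into detecting cancellation in $\sum_{n\le x}\chi(n)$ for $x$ as small as $p^{\epsilon}$, precisely the range in which no unconditional cancellation is presently known, since Burgess reaches only $x\ge p^{1/4+\epsilon}$. I therefore expect the decisive input to be the pattern-counting mechanism of this paper, which controls the number of occurrences of \emph{all} sign patterns of length $k\approx\lceil\log_2p\rceil$ simultaneously and thereby bounds the length of the initial run of residues directly, bypassing the $p^{1/4}$ barrier. The essential new work is to establish those pattern bounds uniformly in $p\bmod4$: for $p\equiv1\pmod4$ the reflection $n\mapsto p-n$ is \emph{palindromic}, $\chi(p-n)=\chi(n)$, rather than antipalindromic, so the counting identities must be re-derived with this symmetry in place of $\chi(-1)=-1$. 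Securing that uniform pattern bound is the main obstacle; once it is in hand the initial residue run has length $O(\log p)$, which is far stronger than, and hence implies, $n(p)\ll_\epsilon p^\epsilon$ for every prime $p$.
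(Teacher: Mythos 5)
There is a genuine gap, and you effectively concede it yourself. Your first two paragraphs are correct but purely classical: the primality of $N=n(p)$, the smooth-number inequality $\sum_{n\le x}\chi(n)\ge 2\Psi(x,N)-x$, and its combination with P\'olya--Vinogradov or Burgess yield only bounds of the shape $n(p)\ll p^{1/(4\sqrt e)+o(1)}$. The entire content of the statement --- $n(p)\ll_\epsilon p^\epsilon$ --- is carried by your final paragraph, and there no argument is given: you reduce the conjecture to a ``uniform pattern bound'' asserting that the initial run of residues has length $O(\log p)$ for \emph{all} primes, and then write that ``securing that uniform pattern bound is the main obstacle.'' That obstacle \emph{is} the theorem; reducing an open conjecture to a strictly stronger unproven assertion is not a proof. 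Your own setup shows why the tools you assembled cannot close it: the inequality needs cancellation in $\sum_{n\le x}\chi(n)$ at $x=p^{\epsilon}$, below the Burgess threshold $x\ge p^{1/4+\epsilon}$, which is precisely the barrier you name.

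The comparison with the paper cuts against you as well. The paper does not prove this statement either --- it is stated as Conjecture 1.1 and explicitly noted to remain open unconditionally; what the paper claims (Section 3) is a pattern-count bound for $p\equiv3\pmod4$ \emph{only}, resting on the skew Hadamard difference-set structure of the residues (Note 2.2), which simply does not exist when $p\equiv1\pmod4$: there $-1$ is a residue, the exact counts underlying Corollaries 2.5--2.6 change (the number of consecutive residue pairs becomes $\frac{p-5}{4}$, for instance), and the reflection symmetry $\chi(p-n)=\chi(n)$ does not let you pair each pattern with its complement as Note 2.4 does. So your proposed ``re-derivation of the counting identities'' for $p\equiv1\pmod4$ is not a routine adaptation. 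Worse, even within $p\equiv3\pmod4$ the paper's passage from exact length-$2$ and length-$3$ counts to patterns of length $k\approx\lceil\log_2p\rceil$ (Lemma 3.2's claim that nonzero error terms ``must be governed by a rule,'' and Lemma 3.4's appeal to the nonexistence of such a rule) is not a rigorous argument you could import to discharge your missing step. Leaning on that mechanism therefore does not close your gap; it replaces one unproven assertion with another.
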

In 1942, Linnik \cite{linnik} proved that, this conjecture follows from the 
Generalised Riemann Hypothesis (GRH). In 1952, Ankeny \cite{ankeny} improved the bound further to $n(p)\ll \log^2 p$. However the conjecture remains open unconditionally.

In 1957, Burgess \cite{berg} proved that $n(p)\ll p^{\frac{1}{4\sqrt{e}}}$ without assuming GRH. 
Till today this remains to be the best known bound.
%\par On the other hand, in 2008 Granville \cite{granville} proved 
%that Vinogradov's conjecture true for primes $p\equiv 3(mod\ 4)$
%does follow if every interval of length $p^{\epsilon}$ close to sufficiently large
%$p$ contains an $p^{\epsilon}-$ smooth integer. Also he mentioned that 
%``smooths in short intervals'' problems might succumb to a clever combinatorial argument,
%rather than sophisticated technique. 
\par On the other hand, in 1990, Graham and Ringrose \cite{graham} proved that, for infinitely many primes $n(p) \geq c \log p$ \ \ .

\par Recently, Tao \cite{tao}, connected one of the standard conjectures in sieve theory called the Elliot-Halberstam conjecture by proving Elliot-Halberstam conjecture implies Vinogradov's conjecture. For more details on Conjecture 1.1, refer to Tao \cite{tao1}.
\par In 1992, by exploring some combinatorial implications of Weil's bound on charecter sums, Peralta \cite{peralta} proved the following.
\begin{theorem}
The number of occurences of arbitrary patterns of quadratic residues and
nonresidues of length $k$ in $\mathbb{Z}_p$ is in the range 
$\frac{p}{2^k}\pm k(3+\sqrt{p})$	
\end{theorem}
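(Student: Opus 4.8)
The plan is to detect each prescribed pattern $(\epsilon_1,\dots,\epsilon_k)\in\{\pm1\}^k$ by a product of Legendre symbols and then to reduce the count to a sum of multiplicative character sums, to which Weil's bound applies. Writing $N$ for the number of $x\in\mathbb{Z}_p$ with $\left(\frac{x+i}{p}\right)=\epsilon_i$ for every $1\le i\le k$, I would begin from the observation that, whenever none of $x+1,\dots,x+k$ is divisible by $p$, the detector
$$\prod_{i=1}^{k}\frac{1}{2}\left(1+\epsilon_i\left(\frac{x+i}{p}\right)\right)$$
equals $1$ if the pattern matches at $x$ and $0$ otherwise. Summing this quantity over all $x\in\mathbb{Z}_p$ therefore recovers $N$ up to the contribution of the at most $k$ ``bad'' residues $x\equiv-i\pmod p$, at each of which the detector has one vanishing factor and so is weighed by at most $1$; this accounts for an error of size at most $k$.

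The key step is to expand the product. Multiplying out and collecting terms by subsets gives
$$\sum_{x}\prod_{i=1}^{k}\frac{1}{2}\left(1+\epsilon_i\left(\frac{x+i}{p}\right)\right)=\frac{1}{2^k}\sum_{S\subseteq\{1,\dots,k\}}\left(\prod_{i\in S}\epsilon_i\right)\sum_{x}\left(\frac{\prod_{i\in S}(x+i)}{p}\right).$$
The empty set $S=\varnothing$ contributes the main term $\frac{1}{2^k}\sum_x 1=\frac{p}{2^k}$. For each nonempty $S$ the polynomial $f_S(x)=\prod_{i\in S}(x+i)$ is squarefree, since the shifts $i$ are distinct modulo $p$ (as $k<p$), hence $f_S$ is not a perfect square and Weil's bound yields
$$\left|\sum_{x\in\mathbb{F}_p}\left(\frac{f_S(x)}{p}\right)\right|\le(|S|-1)\sqrt{p}.$$

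It then remains to sum these bounds. Collecting the contributions over all nonempty subsets gives an error of at most
$$\frac{\sqrt{p}}{2^k}\sum_{j=1}^{k}\binom{k}{j}(j-1)=\sqrt{p}\left(\frac{k}{2}-1+\frac{1}{2^k}\right)<\frac{k}{2}\sqrt{p},$$
so that, combining with the error at most $k$ from the bad residues, $\left|N-\frac{p}{2^k}\right|<\frac{k}{2}\sqrt{p}+k\le k(3+\sqrt{p})$, which is precisely the asserted range. The main obstacle is supplying the correct form of Weil's estimate for the Legendre symbol of a squarefree polynomial and verifying that $f_S$ is never a perfect square; once that input is in place, the expansion and the elementary identity $\sum_{j}\binom{k}{j}(j-1)=k2^{k-1}-2^k+1$ make the error bookkeeping routine. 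A secondary point requiring care is the precise treatment of the at most $k$ zeros of the Legendre symbol, where the detector degenerates and must be bounded crudely rather than evaluated exactly.
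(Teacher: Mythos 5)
This theorem is not proved in the paper at all: it is quoted as Peralta's result \cite{peralta}, and Peralta's own argument is exactly the one you give --- detect the pattern with the product $\prod_{i=1}^{k}\frac{1}{2}\left(1+\epsilon_i\left(\frac{x+i}{p}\right)\right)$, expand over subsets, apply Weil's bound $\left|\sum_{x}\left(\frac{f_S(x)}{p}\right)\right|\le(|S|-1)\sqrt{p}$ to each nonempty-subset character sum, and control the at most $k$ residues where a Legendre symbol vanishes. Your write-up is correct (your bookkeeping in fact gives the sharper error $\frac{k}{2}\sqrt{p}+k$, which is comfortably inside $k(3+\sqrt{p})$), so it matches the cited source's approach in all essentials.
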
	
In this paper, by using purely combinatorial arguments, we discuss about the number of occurences of patterns of length $k$ for primes  
$p\equiv 3\pmod 4$. As an immediate consequence, we prove that, there exists a constant $c$ such that $n(p)\leq c \lceil \log_2 p\rceil$.
This proves beyond Vinogradov's conjecture.
%\par For the rest of the paper, we consider only primes 
%$p \equiv 3(mod\ 4)$ unless Otherwise we specify.

In the next section, we discuss about the number of occurences of all possible 
patterns of quadratic residues and nonresidues of
length 2 and 3 for primes $p\equiv 3\pmod 4$.

\section{Preliminaries}
For a prime $p$, $\mathbb{Z}_p$
denote the group of integers modulo prime $p$.
\par Let $a, a+d, a+2d,...,a+(k-1)d$ be a $k-$arithmetic progression with the common difference $d$, where $a+id \in \mathbb{Z}_p\backslash\{0\}$ for all $0\leq i \leq k-1$.
We say that, the above arithmetic progression is a $k-$arithmetic progression of quadratic residues if for every $i$, $a+id$ is a quadratic residue. Similarly, if $a+id$ is a nonresidue for every $i$ then we say that it is $k-$arithmetic progression of nonresidues. We also define, the following.
\par Let $a, a+1, a+2,..., a+(k-1)$ be $k-$consecutive nonzero elements of $\mathbb{Z}_p$. Then we define 

$ Pattern \ of \ length \ k = \begin{cases}
a+i  & \text{is quadratic residue for all $i$ or }\\
a+i & \text{is nonresidue for all $i$ or }\\
a+i & \text{is quadratic residue for some $i$ and nonresidue for other $i$}
\end{cases}$\\

Also, we define, two arithmetic progressions $a, a+d, a+2d,..., a+(k-1)d$ and $b, b+d, b+2d,..., b+(k-1)d$ tobe the same if $a+id$ and  $b+id$ are simultaneously quadratic residues or nonresidues for $0\leq i \leq k-1$.

\par In this section, for a given prime $p\equiv 3\pmod 4$, we find the exact 
number of occurences of all possible patterns 
of length 2 and 3 in $\mathbb{Z}_p$.
\par Let $r$ and $n$ be the quadratic residue and the nonresidue respectively.
Following are the set of all possible patterns of length 2 and 3.
\begin{center}
\begin{tabular}{|l|c|r|r|r|p{1.8cm}|}
  \hline
 \it{rr} & \it{nr} \\
  %\hline
 \it{rn} & \it{nn}   \\
 \hline
\end{tabular}
\begin{tabular}{|l|c|r|r|r|p{1.8cm}|}
  \hline
 \it{rrr} & \it{rrn} & \it{nrr} & \it{nrn} \\
  %\hline
 \it{rnr} & \it{rnn} & \it{nnr} & \it{nnn} \\
 \hline
\end{tabular}
\end{center}
\begin{definition}\cite{lint}
 A $k-$element subset $D$ of a finite group $(G, *)$ of order $v$ is called a 
$(v,k,\lambda)$ difference set in $G$ provided that the multiset 
$\{d_1*d_2^{-1} : d_1,d_2\in D, d_1 \neq d_2 \}$ contains 
each nonidentity elelment of $G$  exactly $\lambda$ times.

A difference set $D$ in an additive group $G$ is called a skew Hadamard difference 
set if and only if $G$ is the disjoint union of $D$, $-D$, $\{0\}$.
\end{definition}
\begin{note}
If $p\equiv 3\pmod 4$, then the set of all quadratic residues of $\mathbb{Z}_p$ forms
a skew Hadamard difference set (SHDS) with $\lambda = \frac{p-3}{4}$. Therefore, 
there are exactly $\frac{p-3}{4}$ number of $2-$arithmetic progressions of quadratic residues
with common difference $d$ for every $d\in \mathbb{Z}_p\backslash\{0\}$.
\end{note}
From the above note, we can see that, the number of 2-arithmetic progressions of quadratic residues is exactly $\frac{p-3}{4}$.
%That is, the number of occurences of patterns of the type 
%$nn$ in $\mathbb{Z}_p$ is exactly $\frac{p-3}{4}$.
%\begin{note}
 %In 1952, Perron \cite{perr} proved that every quadratic residue 
 %in $\mathbb{Z}_p$ with $p$ being an odd prime, 
 %can be written as sum of two quadratic residues in exactly 
 %$\lfloor \frac{p+1}{4}\rfloor-1$ ways. Also, every nonresidue can be written 
 %as sum of two quadratic residues in exactly $\lfloor \frac{p+1}{4}\rfloor$ ways.
%\end{note}
\par In 2016 \cite{shiv}, we have proved the following.
%that the number of 3-arithmetic progressions of
%quadratic residues with common difference $d$ for every 
%$d\in \mathbb{Z}_p\backslash\{0\}$ is 
%exactly $\lfloor \frac{p-3}{8}\rfloor$. 
\begin{theorem}\cite{shiv}
 For $p\equiv 3\pmod 4$ the number of
$3-$arithmetic progressions of quadratic residues with 
common difference $d$ is exactly 
$\lfloor\frac{p-3}{8}\rfloor$ for every 
$d\in \mathbb{Z}_p\backslash\{0\}$.
\end{theorem}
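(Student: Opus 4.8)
The plan is to count, for a fixed nonzero $d$, the number $N(d)$ of $a$ for which $a,a+d,a+2d$ are all nonzero quadratic residues, and to show this equals $\lfloor (p-3)/8\rfloor$. Write $\chi$ for the Legendre symbol modulo $p$, so $\chi(x)=1$ on residues, $-1$ on nonresidues, and $\chi(0)=0$; since $p\equiv 3\pmod 4$ we have the crucial fact $\chi(-1)=-1$. First I would reduce to $d=1$. The map $a\mapsto ad$ is a bijection of $\mathbb{Z}_p$ carrying the progression $b,b+1,b+2$ to $a,a+d,a+2d$, and it multiplies each Legendre symbol by $\chi(d)$. Hence $N(d)=N(1)$ when $d$ is a residue, while if $d$ is a nonresidue then $N(d)$ equals the number of $3$-term progressions of common difference $1$ consisting entirely of nonresidues. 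The involution $x\mapsto -x$ reverses such a progression and, because $\chi(-1)=-1$, interchanges residues and nonresidues; so the all-nonresidue count also equals $N(1)$. Thus $N(d)$ is independent of $d$, and it suffices to evaluate $N(1)$.

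Next I would encode membership by the indicator $\tfrac12(1+\chi(x))$ for $x\neq0$ and write
\[
8N(1)=\sum_{a}\,(1+\chi(a))(1+\chi(a+1))(1+\chi(a+2)),
\]
the sum ranging over the $p-3$ values of $a$ with $a,a+1,a+2$ all nonzero. Expanding the product groups the terms by degree: a constant $1$, three linear symbols $\chi(a+i)$, three ``quadratic'' symbols $\chi((a+i)(a+j))$, and one ``cubic'' symbol $\chi(a(a+1)(a+2))$.

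The heart of the computation is that the odd-degree contributions vanish precisely because $\chi(-1)=-1$. For the linear part, using $\sum_{a\in\mathbb{Z}_p}\chi(a+i)=0$ and correcting for the three excluded values of $a$ leaves a combination of $\chi(\pm1)$ and $\chi(\pm2)$ that cancels in pairs via $\chi(-2)=\chi(-1)\chi(2)=-\chi(2)$. For the cubic part, the substitution $a\mapsto a-1$ turns $a(a+1)(a+2)$ into $a^{3}-a$, which is odd; since $\chi(-(a^3-a))=-\chi(a^3-a)$, replacing $a$ by $-a$ shows the full sum $\sum_{a}\chi(a^3-a)$ equals its own negative, hence is $0$ (and the excluded points contribute $\chi(0)=0$). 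For the quadratic part I would invoke the elementary evaluation $\sum_{a\in\mathbb{Z}_p}\chi(a^2+ba+c)=-1$ whenever $b^2-4c\neq0$; each of $a(a+1)$, $a(a+2)$, $(a+1)(a+2)$ has nonzero discriminant, and after subtracting the boundary values at $a=0,-1,-2$ the three sums total $-2-2\chi(2)$.

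Collecting everything gives $8N(1)=(p-3)+0+(-2-2\chi(2))+0$, so
\[
N(1)=\frac{p-5-2\chi(2)}{8}.
\]
Finally I would split on $p\bmod 8$: for $p\equiv3\pmod8$ we have $\chi(2)=-1$ and $N(1)=\tfrac{p-3}{8}=\lfloor\tfrac{p-3}{8}\rfloor$, while for $p\equiv7\pmod8$ we have $\chi(2)=1$ and $N(1)=\tfrac{p-7}{8}=\lfloor\tfrac{p-3}{8}\rfloor$, which completes the proof. I expect the main obstacle to be the cubic symbol: for a general cubic one could only bound $\sum_a\chi(\mathrm{cubic})$ by $O(\sqrt p)$ via Weil's theorem, which would destroy any exact count. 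It is exactly the hypothesis $p\equiv3\pmod4$ (equivalently $\chi(-1)=-1$, making $y^2=x^3-x$ supersingular) that forces this sum to be $0$ by the simple antisymmetry above. The only other delicate point is the careful bookkeeping of the excluded boundary terms $a=0,-1,-2$, which is where the dependence on $\chi(2)$ — and hence the floor — enters.
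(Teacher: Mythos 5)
Your proof is correct, and every step checks out: the reduction to $d=1$ via the bijections $a\mapsto ad^{-1}$ and $x\mapsto -x$ (both exploiting $\chi(-1)=-1$), the expansion of $8N(1)$, the vanishing of the linear terms after boundary corrections, the evaluation $-2-2\chi(2)$ of the quadratic terms from the standard lemma $\sum_{a}\chi(a^2+ba+c)=-1$ for nonzero discriminant, and the vanishing of $\sum_a\chi(a^3-a)$ by the oddness of $a^3-a$ together with $\chi(-1)=-1$. The final case split on $p\bmod 8$ correctly recovers $\lfloor(p-3)/8\rfloor$ in both cases. However, note that the paper you are being compared against does not actually prove this statement at all: Theorem 2.3 is imported by citation from the author's earlier article \cite{shiv}, and the present paper's surrounding machinery (Note 2.2 on skew Hadamard difference sets, and its stated intent to use ``purely combinatorial arguments'') indicates that the cited proof proceeds combinatorially from the $(v,k,\lambda)$ difference-set structure of the residues, not analytically. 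Your character-sum computation is therefore a genuinely different, self-contained route, and it buys strictly more than the statement asks for: you obtain the exact count $N(d)=\frac{p-5-2\chi(2)}{8}$, making visible that the floor function in the statement is precisely the fingerprint of whether $2$ is a residue, i.e.\ of $p\bmod 8$ --- information the difference-set argument leaves implicit. The price is that you must invoke the (elementary but nontrivial) complete character sum for quadratic polynomials, whereas the combinatorial route needs only the $\lambda=\frac{p-3}{4}$ property; for this paper's purposes, where the difference-set viewpoint is reused later, either proof would serve, but yours is the one a reader could verify without consulting \cite{shiv}.
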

%For the sake of completeness
%we give a different proof as well.

%\begin{proof}
%Let $p$ be a prime of the form $4n-1$, where $n \geq 3$. Let $a_k$ be any quadratic 
%residue. Consider, $a_k + a_k$ which is equal to say $b_k$ where, 
%$b_k \in \mathbb{Z}_p \backslash\{0\}$. From Note 2.3, we know that, $b_k$ can be written
%as sum of two distinct quadratic residues in exactly $\lfloor \frac{p-3}{8}\rfloor$ ways.
%Let $b_k=a_k + a_k$, where, 
%$b_k \in \mathbb{Z}_p \backslash\{0\}$ and $a_k$ is a 
%quadratic residue. From the above note, we see that, $b_k$ can 
%be expressed as sum of two distinct quadratic residues in 
%$\lfloor \frac{p-3}{8}\rfloor$ ways. That is, 
%$a_i + a_j = a_k + a_k = b_k$, where $a_i,\ a_j$ 
%are distinct quadratic residues. 
%Then $a_i-a_k=a_k-aj = d$ for some 
%$d \in \mathbb{Z}_p \backslash\{0\}$. 
%This implies that $a_i,\ a_k,\ a_j$ form an arithmetic progression 
%of length 3 with common difference $d$. 
%Hence, by Theorem 1, for every common difference 
%$d \in \mathbb{Z}_p \backslash\{0\}$, 
%the number of 3-arithmetic progressions of quadratic residues
%is $\lfloor \frac{p-3}{8}\rfloor$.
%\end{proof}
%As a consequence of above results, the number of $2-$term and $3-$term arithmetic progressions of 
%nonresidues are $\frac{p-3}{4}$ and $\lfloor\frac{p-3}{8}\rfloor$ respectively.\\
%Define this properly.
%(\textbf{Arbitrary pattern I mean, an arithmetic progression containing both
%quadratic residues and nonresidues})\\
\begin{note}
	We know that, for any given prime $p\equiv 3(mod\ 4)$, if $x$ is a quadratic residue
	then $-x$ is nonresidue. 
\end{note}
Therefore, the number of $2-$arithmetic progressions, $3-$arithmetic progressions of nonresidues with common difference $d$ for every $d\in \mathbb{Z}_p\backslash\{0\}$ are 
$\frac{p-3}{4}$, $\lfloor\frac{p-3}{8}\rfloor$ respectively. \\
\par From above Theorem 2.3, Note 2.2 and Note 2.4, we have the following.
\begin{cor}
For $p\equiv 3\pmod 4$ the number of occurences of any pattern of length $3$ is either $\lfloor\frac{p-3}{8}\rfloor$ or $\lceil\frac{p-3}{8}\rceil$.
\end{cor}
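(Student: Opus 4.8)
The plan is to fix the common difference $d=1$ (the argument is identical for any fixed $d\in\mathbb{Z}_p\backslash\{0\}$) and, for each of the eight patterns $XYZ$ with $X,Y,Z\in\{r,n\}$, to let $n_{XYZ}$ denote the number of starting points $a$ with $a,a+1,a+2\in\mathbb{Z}_p\backslash\{0\}$ whose triple $(a,a+1,a+2)$ realizes that pattern. Exactly $p-3$ values of $a$ are admissible (we discard $a=0,-1,-2$), and each realizes exactly one pattern, so I have the normalisation $\sum_{XYZ} n_{XYZ}=p-3$. By Theorem 2.3 applied with $d=1$ the residue triples are counted by $n_{rrr}=\lfloor\frac{p-3}{8}\rfloor$, and since the same count holds for nonresidue triples by Note 2.4, $n_{nnn}=\lfloor\frac{p-3}{8}\rfloor$ as well.

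Next I would exploit the involution $a\mapsto -(a+2)$ on the admissible starting points, which is a bijection because $a=0,-1,-2$ are carried to $-2,-1,0$. It sends the ordered triple $(a,a+1,a+2)$ to $\bigl(-(a+2),-(a+1),-a\bigr)$, so it simultaneously reverses the triple and multiplies by $-1$; by Note 2.4 multiplication by $-1$ interchanges $r$ and $n$. Hence this map carries the pattern $XYZ$ to $\bar Z\,\bar Y\,\bar X$ (bar denoting the swap $r\leftrightarrow n$), which yields the three identities $n_{rrn}=n_{rnn}$, $n_{nrr}=n_{nnr}$ and $n_{rnr}=n_{nrn}$. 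Together with $n_{rrr}=n_{nnn}$ this reduces the problem to pinning down just four numbers.

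To find these I would feed Note 2.2 into a pair-to-triple decomposition. The $\frac{p-3}{4}$ consecutive residue pairs $(a,a+1)$ are classified by the status of $a+2$: the pair extends to $rrr$ if $a+2$ is a residue, to $rrn$ if it is a nonresidue, and fails to extend only if $a+2=0$. The unique pair with $a+2=0$ is $(-2,-1)$, and since $-1$ is a nonresidue it is never an $rr$ pair; therefore $n_{rrr}+n_{rrn}=\frac{p-3}{4}$, giving $n_{rrn}=\frac{p-3}{4}-\lfloor\frac{p-3}{8}\rfloor$. Running the same decomposition on the trailing residue pairs $(a+1,a+2)$ gives $n_{rrr}+n_{nrr}=\frac{p-3}{4}-\delta$, where $\delta$ records whether the boundary pair $(1,2)$ (arising from $a=0$) is an $rr$ pair. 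Finally $n_{rnr}$ is forced by the normalisation $\sum_{XYZ} n_{XYZ}=p-3$ once the other six are known.

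The hard part will be the boundary bookkeeping concentrated in $\delta$. Since $1$ is always a residue, $(1,2)$ is an $rr$ pair exactly when $2$ is a residue, i.e. when $\left(\frac{2}{p}\right)=1$, which for $p\equiv 3\pmod 4$ happens precisely for $p\equiv 7\pmod 8$. I would therefore split into the two cases. For $p\equiv 3\pmod 8$ one has $8\mid p-3$ and $\delta=0$, and a short computation makes all eight counts equal to $\frac{p-3}{8}=\lfloor\frac{p-3}{8}\rfloor=\lceil\frac{p-3}{8}\rceil$. For $p\equiv 7\pmod 8$ one has $\delta=1$, the quantity $\frac{p-3}{4}-\lfloor\frac{p-3}{8}\rfloor$ equals $\lceil\frac{p-3}{8}\rceil$, and the $\pm1$ corrections distribute so that $n_{rrn}=n_{rnn}=n_{rnr}=n_{nrn}=\lceil\frac{p-3}{8}\rceil$ while the remaining four counts equal $\lfloor\frac{p-3}{8}\rfloor$; checking that these sum to $p-3$ confirms consistency. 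In either case every pattern count lies in $\{\lfloor\frac{p-3}{8}\rfloor,\lceil\frac{p-3}{8}\rceil\}$, which is the assertion.
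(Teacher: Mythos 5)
Your proof is correct, and it takes a more careful route than the paper's own argument --- which, as written, contains two erroneous intermediate claims that your bookkeeping avoids. Both proofs bootstrap from the same ingredients (Theorem 2.3 gives $n_{rrr}=\lfloor\frac{p-3}{8}\rfloor$, Note 2.2 gives the pair counts, Note 2.4 gives the residue/nonresidue swap under negation), but the paper deduces $n_{rnn}=\lfloor\frac{p-3}{8}\rfloor$ by subtraction and then transfers to the $n$-initial patterns by complementing each pattern \emph{without reversing it}, whereas your involution $a\mapsto -(a+2)$ correctly records that negation reverses a consecutive run, sending the pattern $XYZ$ to $\bar{Z}\bar{Y}\bar{X}$, and your $\delta$ tracks the boundary pairs $(-2,-1)$ and $(1,2)$ exactly. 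This matters: for $p\equiv 7\pmod 8$ the true values are $n_{rnn}=\lceil\frac{p-3}{8}\rceil$ and $n_{nnr}=\lfloor\frac{p-3}{8}\rfloor$, exactly opposite to what the paper's proof asserts. For instance, at $p=7$ the four admissible triples carry the patterns $rrn,\ rnr,\ nrn,\ rnn$, so $n_{rnn}=1=\lceil\frac{4}{8}\rceil$ and $n_{nnr}=0=\lfloor\frac{4}{8}\rfloor$, contradicting the paper's claimed $n_{rnn}=0$, $n_{nnr}=1$; at $p=23$ one finds $n_{rrr}=n_{nnn}=n_{nrr}=n_{nnr}=2$ and $n_{rrn}=n_{rnn}=n_{rnr}=n_{nrn}=3$, matching your case analysis precisely. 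The corollary itself survives in the paper because its two errors merely swap one floor with one ceiling, so every count still lies in $\{\lfloor\frac{p-3}{8}\rfloor,\lceil\frac{p-3}{8}\rceil\}$; but your argument is the one that actually pins down, correctly, which pattern receives which value.
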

\begin{proof}
Since the number of occurences of patterns of the type $rrr$ is $\lfloor\frac{p-3}{8}\rfloor$, from Note 2.2, we can see that, the number of occurences of patterns of the type $rnr$ and $rrn$ are $\lceil\frac{p-3}{8}\rceil$. Therefore, the number of occurences of patterns of type $rnn$ is $\lfloor\frac{p-3}{8}\rfloor$. From Note 2.4,
the number of occurences of patterns of type $nnn$, $nrr$ are $\lfloor\frac{p-3}{8}\rfloor$ and the number of occurences of patterns of type $nnr$, $nrn$ are $\lceil\frac{p-3}{8}\rceil$.	
\end{proof}
Similarly, one can prove the following.
\begin{cor}
For $p\equiv 3\pmod 4$ the number of occurences of any pattern of length 2 is either $\frac{p-3}{4}$ or $\frac{p-3}{4}+1$.
\end{cor}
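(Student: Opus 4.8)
The plan is to mirror the argument of Corollary~2.5 exactly, but starting from the length-$2$ count supplied by Note~2.2 rather than the length-$3$ count of Theorem~2.3. By Note~2.2, for every fixed common difference $d \in \mathbb{Z}_p\backslash\{0\}$ the number of occurrences of the pattern $rr$ is exactly $\frac{p-3}{4}$. The key structural fact I would exploit is the same one driving Corollary~2.5: along a fixed arithmetic progression with difference $d$, consecutive pairs are partitioned by the residue/nonresidue status of the endpoints, so the four length-$2$ patterns $rr$, $rn$, $nr$, $nn$ must account for a total that is essentially determined, and Note~2.2 pins down one of the four counts precisely.

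First I would set up the counting identity. For a fixed $d$, there are $p-1$ admissible starting points $a$ (those with $a, a+d \in \mathbb{Z}_p\backslash\{0\}$, up to the boundary adjustment coming from the excluded zero), and each such pair realizes exactly one of the four patterns; hence the four pattern-counts sum to a fixed value. Next I would invoke Note~2.2 to fix the count of $rr$ at $\frac{p-3}{4}$. Then, exactly as in the proof of Corollary~2.5, I would use Note~2.2's reflection principle: since $x \mapsto -x$ sends residues to nonresidues when $p\equiv 3\pmod 4$, the map $a \mapsto -(a+d)$ (reversing the progression) gives a bijection between occurrences of $rr$ and occurrences of $nn$, and between $rn$ and $nr$. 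This forces the count of $nn$ to equal that of $rr$, namely $\frac{p-3}{4}$, and forces the two mixed patterns $rn$ and $nr$ to be equal to each other.

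Combining these, the two equal mixed counts absorb the remaining total, which differs from $\frac{p-3}{4}$ by the small boundary term of $+1$ arising from the treatment of the zero element and the odd/even parity of $p-1$; this is what produces the alternative value $\frac{p-3}{4}+1$. The main obstacle I anticipate is the bookkeeping at the boundary: one must track carefully how the excluded element $0$ and the endpoints $a$ near the ``wrap-around'' of the progression contribute, since that is precisely what distinguishes the exact value $\frac{p-3}{4}$ from $\frac{p-3}{4}+1$. Everything else is a direct transcription of the length-$3$ argument, so I would state the computation as ``one can prove the following'' and verify only that the parity accounting yields exactly the two claimed values and no third value.
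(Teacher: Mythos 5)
Your overall plan (pin down the $rr$ count via Note~2.2, use the negation map of Note~2.4, and let the total determine the remaining counts) is the route the paper intends by ``similarly, one can prove,'' but one of your key steps is false. The reflection $a \mapsto -(a+d)$ composes \emph{two} operations: it reverses the order of the pair, and it flips residue/nonresidue status (since $p\equiv 3\pmod 4$). On the mixed patterns these two effects cancel rather than compound: if $(a,a+1)$ has pattern $rn$, then $-(a+1)$ is a residue and $-a$ is a nonresidue, so $(-(a+1),-a)$ again has pattern $rn$. Thus the map swaps $rr \leftrightarrow nn$ (that part of your argument is correct) but it \emph{fixes} the classes $rn$ and $nr$; it is not a bijection between them. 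Indeed no such bijection can exist: the four counts sum to $p-2$, and once $\mathrm{count}(rr)=\mathrm{count}(nn)=\frac{p-3}{4}$ you get $\mathrm{count}(rn)+\mathrm{count}(nr)=\frac{p-1}{2}$, which is odd for $p\equiv 3\pmod 4$, so the two mixed counts cannot be equal; your argument would force each of them to be the non-integer $\frac{p-1}{4}$. Concretely, for $p=7$ the pattern $rn$ occurs twice (at $(2,3)$ and $(4,5)$) while $nr$ occurs once (at $(3,4)$).

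The repair is the same ``row sum'' device used in the paper's proof of Corollary~2.5, and it shows the asymmetry between $rn$ and $nr$ is genuine --- it is exactly the source of the $+1$. Since $-1$ is a nonresidue, every residue $a$ satisfies $a+1\not\equiv 0 \pmod p$, so $\mathrm{count}(rr)+\mathrm{count}(rn)=\frac{p-1}{2}$, giving $\mathrm{count}(rn)=\frac{p-1}{2}-\frac{p-3}{4}=\frac{p-3}{4}+1$. Since $1$ is a residue, the pairs with second entry a residue exclude the single case $b=1$, so $\mathrm{count}(rr)+\mathrm{count}(nr)=\frac{p-1}{2}-1$, giving $\mathrm{count}(nr)=\frac{p-3}{4}$. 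Together with $\mathrm{count}(nn)=\mathrm{count}(rr)=\frac{p-3}{4}$ from the (correct) negation bijection, all four counts are $\frac{p-3}{4}$ or $\frac{p-3}{4}+1$, as claimed.
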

%\par Number of occurence of each pattern of size 2
%is exactly $\frac{p-3}{4}$ and the number of occurence of each patterns 
%of size 3 is either $\lfloor\frac{p-3}{8}\rfloor$ or $\lceil\frac{p-3}{8}\rceil$.
In the next section, we discuss about the number of occurences
of arbitrary patterns of length $k>3$.

\section{Main results}
In this section, we prove that, if $p$ is large enough then there exists a constant $c$ such that the number 
of occurences of any pattern of length $k$ is at most c as $k$ tends to 
$\lceil \log_2 p\rceil$.

We know that there are $2^k$ number of patterns of length $k$. Let us classify
all the possible patterns of length $k$ into four types and each of 
size $2^{k-2}$ as follows.
\begin{enumerate}
 \item Patterns starts with quadratic residue and ends with quadratic residue.
 \item Patterns starts with quadratic residue and ends with nonresidue.
 \item Patterns starts with nonresidue and ends with quadratic residue.
 \item Patterns starts with nonresidue and ends with nonresidue.
\end{enumerate}

From Note 2.4, the number of occurences
of all patterns of type 3 and 4 is immediate if we know the number of occurences
of all patterns of type 1 and 2.

Now, we discuss about the number of occurences of patterns
of type 1 and same argument is valid for type 2 as well.

Let $(\frac{p}{2^k})=\lfloor \frac{p}{2^k} \rfloor$ or 
$\lceil \frac{p}{2^k} \rceil$. Consider all possible patterns of type 1 with length $k$. Since we do not know the 
exact number of occurences of patterns of length $k\geq 4$, let us assume that, 
the number of occurences of a pattern 
be $(\frac{p}{2^k})+e(i,k)$,  where, 
$1 \leq i \leq 2^{k-2}$ and $e(i,k) \in \mathbb{Z}$
for every $i$ and $k$.

Without loss of generality, let
\begin{equation}\label{11}
\underbrace{(\frac{p}{2^k})+(\frac{p}{2^k})+...+(\frac{p}{2^k})}_{\text{$2^{k-2}$ terms}}=
\frac{p-3}{4}.
\end{equation}
From (\ref{11}) and Note 2.2, we can see that, 
\begin{equation}\label{12}
\sum\limits_{i=1}^{2^{k-2}}e(i,k) = 0.
\end{equation}
Now we prove the following.
\begin{lemma}
	If $e(i,k)$ is zero for every $i$, $k$
	then $(\frac{p}{2^k})$ is at most 1
	as $k$ tends to $\lceil \log_2p\rceil$.
\end{lemma}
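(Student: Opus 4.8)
The plan is to combine the hypothesis $e(i,k)=0$ with the normalisation (\ref{11}). If every error term vanishes, then all $2^{k-2}$ patterns of type 1 occur the \emph{same} number of times, namely $\left(\frac{p}{2^k}\right)$. Hence the left-hand side of (\ref{11}) is simply $2^{k-2}$ copies of this common value, and the equation reads \[2^{k-2}\left(\frac{p}{2^k}\right)=\frac{p-3}{4},\] so that $\left(\frac{p}{2^k}\right)=\frac{p-3}{2^k}$.

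Next I would evaluate this at $k=\lceil\log_2 p\rceil$. By the definition of the ceiling, $2^{k-1}<p\le 2^k$, and since $p$ is an odd prime we have $p\neq 2^k$, so in fact $2^{k-1}<p<2^k$. Consequently \[\left(\frac{p}{2^k}\right)=\frac{p-3}{2^k}<\frac{p}{2^k}<1,\] which gives $\left(\frac{p}{2^k}\right)\le 1$, as claimed. As an independent check that this is the correct bound, note that directly from the definition $\left(\frac{p}{2^k}\right)\in\{\lfloor p/2^k\rfloor,\lceil p/2^k\rceil\}$, and since $0<p/2^k\le 1$ at $k=\lceil\log_2 p\rceil$, both the floor and the ceiling are already at most $1$.

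The step I expect to require the most care is the meaning of the phrase ``as $k$ tends to $\lceil\log_2 p\rceil$''. Because $k$ is a discrete parameter, this is really an evaluation at the single integer $k=\lceil\log_2 p\rceil$ rather than a limit, and I would phrase it that way to remove ambiguity. I would also pinpoint where the hypothesis is used: it is exactly what upgrades ``the counts sum to $\frac{p-3}{4}$'', which is all that (\ref{11}) and (\ref{12}) give in general, to ``each individual count equals $\frac{p-3}{2^k}$''. Since this common value is a nonnegative integer lying strictly below $1$, it is in fact forced to be $0$, so the stated bound $\le 1$ holds with room to spare; without $e(i,k)=0$ one controls only $\sum_i e(i,k)=0$ and not the terms separately, so the collapse of all counts to a single value is the load-bearing consequence of the hypothesis.
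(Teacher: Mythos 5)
Your proposal is correct, but be aware that the paper's own ``proof'' of this lemma contains no argument whatsoever: it restates the definition of $e(i,k)$ and then repeats the statement of the lemma verbatim as its conclusion. The substantive content is therefore entirely yours. Of your two arguments, the ``independent check'' is the one that matches what the paper can only have intended: at $k=\lceil\log_2 p\rceil$ we have $2^{k-1}<p<2^k$ (strictly, since $p$ is odd), so $\lfloor p/2^k\rfloor=0$ and $\lceil p/2^k\rceil=1$, and the conclusion $(\frac{p}{2^k})\leq 1$ follows from the definition of $(\frac{p}{2^k})$ alone; the hypothesis $e(i,k)=0$ is never needed. Your main route through (\ref{11}) buys something different: it pins the common count down to $\frac{p-3}{2^k}$, and, as you yourself note, a count must be a nonnegative integer, so this value is forced to be $0$, which would mean $p=3$. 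In other words, that derivation shows that for large $p$ the hypothesis ``$e(i,k)=0$ for all $i$'' is incompatible with (\ref{11}), since it would require $2^{k-2}$ to divide $\frac{p-3}{4}$ with an integer quotient below $1$; so on that route the lemma holds essentially vacuously. That is still a legitimate proof of a conditional statement, but it exposes a real incoherence in the paper's framework --- the normalisation (\ref{11}) cannot hold exactly with integer counts --- which the purely definitional argument avoids. Either way, your write-up is sound and strictly more rigorous than what the paper provides.
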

\begin{proof}
	Let the number of occurences of a pattern 
	be $(\frac{p}{2^k})+e(i,k)$,  where, 
	$1 \leq i \leq 2^{k-2}$ and $e(i,k) \in \mathbb{Z}$
	for every $i$ and $k$. If $e(i,k)$ is zero for every $i,\ k$
	then  $(\frac{p}{2^k})\leq 1$ as $k$ tends to $\lceil \log_2p\rceil$.	
\end{proof}	
\begin{lemma}
If $e(i,k)$ is nonzero then it must be goverened by a rule.
\end{lemma}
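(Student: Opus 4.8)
The plan is to read the asserted ``rule'' as the recursive refinement identity that ties the error terms of length--$k$ patterns to those of length $k+1$ (equivalently $k-1$), supplemented by the reflection identity of Note 2.4. The starting point is purely combinatorial: every occurrence of a pattern $P$ of length $k$ at a starting position $a$ (so that $a,a+1,\dots,a+(k-1)$ realise $P$) is counted exactly once among the occurrences of its two one--step extensions $Pr$ and $Pn$ of length $k+1$, according to whether $a+k$ is a residue or a nonresidue. Writing $N_k(P)$ for the number of occurrences of $P$, this gives
\begin{equation}\label{ext}
N_k(P) = N_{k+1}(Pr) + N_{k+1}(Pn),
\end{equation}
together with the mirror identity $N_k(P) = N_{k+1}(rP) + N_{k+1}(nP)$ obtained by prepending. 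First I would record \eqref{ext} and note that, since an extension merely sends a type-1 pattern to a type-1 and a type-2 pattern, the same averaging constant $(\tfrac{p}{2^{k}})$ and the relation \eqref{12} are available for type 2 as well, exactly as the text anticipates.

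Next I would substitute $N_k(P)=(\tfrac{p}{2^k})+e(P,k)$ into \eqref{ext}. Because the two halves add up to $(\tfrac{p}{2^k})$ only up to a parity discrepancy, I would fix
\begin{equation}\label{split}
\Big(\tfrac{p}{2^k}\Big) = \Big(\tfrac{p}{2^{k+1}}\Big) + \Big(\tfrac{p}{2^{k+1}}\Big) + \epsilon(k), \qquad \epsilon(k)\in\{-1,0,1\},
\end{equation}
where $\epsilon(k)$ is determined by the floor/ceiling conventions of this section. Combining \eqref{ext} with \eqref{split} yields the governing recursion
\begin{equation}\label{rule}
e(P,k) = e(Pr,k+1) + e(Pn,k+1) + \epsilon(k),
\end{equation}
which is precisely the ``rule'': the error of a pattern equals the sum of the errors of its two children in the extension tree, corrected by a bounded parity term. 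I would then invoke Note 2.4: for $p\equiv 3\pmod 4$ the map $x\mapsto -x$ exchanges residues and nonresidues, and reading $a,a+1,\dots,a+(k-1)$ backwards under this map shows that $P$ and its reverse--complement $\widetilde P$ occur equally often, so $e(P,k)=e(\widetilde P,k)$ --- a second rigid constraint on the errors.

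Finally I would anchor the recursion: Corollaries 2.6 and 2.5 pin down $e(P,2)$ and $e(P,3)$ exactly, and \eqref{rule} propagates these upward, so that no $e(P,k)$ is free --- each is forced, up to accumulated $\epsilon$--corrections, by the length-$2$ and length-$3$ data together with the consistency relation \eqref{12} at every level. The main obstacle, I expect, is twofold. The lesser difficulty is the bookkeeping of the parity terms $\epsilon(k)$ (and of the $O(1)$ boundary effect coming from positions with $a+k\equiv 0$) across the roughly $\lceil\log_2 p\rceil$ levels of the tree, whose cumulative contribution must be controlled for the conclusion to remain meaningful. The more serious difficulty is that \eqref{rule} is a single equation in three unknown errors and therefore constrains, rather than determines, the individual $e(P,k)$; to justify that the errors are ``governed'' in a genuinely predictive sense I would need to couple \eqref{rule} with the reflection symmetry $e(P,k)=e(\widetilde P,k)$ and with \eqref{12} at each level and argue that the resulting linear system leaves essentially one degree of freedom per level, which is the delicate step and the crux on which the later bound on $(\tfrac{p}{2^k})$ must rest.
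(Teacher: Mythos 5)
Your proposal takes a genuinely different route from the paper. The paper's proof restricts to $k=(p_1p_2\cdots p_s)n+1$ with $p_1,\dots,p_s$ distinct primes and, for each $j$, reads off the sub-progression of common difference $p_j$ inside a window of $k$ consecutive elements; this partitions the $2^{k-2}$ type-1 patterns into groups according to the induced shorter pattern, and summing counts over each group produces the constraints (3.3)--(3.4). The asserted ``rule'' is then the simultaneous validity of these constraints for all $j$, the point being that the $s$ partitions cut across one another. You instead relate level $k$ to level $k+1$ by the append/prepend identities $N_k(P)=N_{k+1}(Pr)+N_{k+1}(Pn)$ and $N_k(P)=N_{k+1}(rP)+N_{k+1}(nP)$ (correct up to the $O(1)$ windows meeting $0$), add the reverse-complement symmetry from Note 2.4 (also correct: for $p\equiv 3\pmod 4$ the map $x\mapsto -x$ flips the quadratic character, so a pattern and its reversed complement occur equally often), and anchor at lengths $2$ and $3$, where Corollaries 2.5 and 2.6 give exact values. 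Your constraints are cleaner in two respects: they apply to every $k$, not only to $k\equiv 1$ modulo a product of many distinct primes, and they never leave common difference $1$, so you do not need the paper's tacit identification of the count of a pattern along a progression of difference $p_j$ with the count of a consecutive pattern --- an identification the paper justifies only for lengths $2$ and $3$ (it is repairable by rescaling by $p_j^{-1}$, but that argument is not in the text).

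The gap is the one you flagged yourself, and it is genuine: your system gives one equation per parent in two unknown child errors, so passing from length $k$ to length $k+1$ leaves a large number of the $e(\cdot,k+1)$ undetermined even after imposing the reflection symmetry and the level sum (3.2); the sentence ``no $e(P,k)$ is free --- each is forced'' is therefore not established by what precedes it, and the lemma's conclusion, in any reading strong enough to support Lemma 3.4 and Theorem 3.5 later, does not follow from your constraints alone. You should know, however, that the paper's own proof makes exactly the same leap: from ``the $e(i,k)$ satisfy the overlapping constraints (3.4) for every $j$'' it concludes that $e(i,k)$ ``can't be choosen randomly,'' hence ``must be defined by a rule,'' with no argument that the combined linear system has a small solution set. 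So your proposal lands at essentially the same depth as the paper's proof, by a different and in some ways sounder path; what neither argument supplies is the determinacy statement that the remainder of Section 3 actually needs.
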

\begin{proof}

Let $k$ be $(p_1p_2...p_s)n+1$, 
where, $p_1,p_2,...,p_s$ 
are distinct primes such that $n$ and $s$ are positive integers. Then
for any arbitrary pattern (type 1) of length $k$, there are arithmetic
progressions of lengths $(p_2p_3...p_s)n+1$, $(p_1p_3...p_s)n+1$,..., $(p_1p_2...p_{s-1})n+1$
with common differences $p_1$, $p_2$,..., $p_s$ respectively.

Corresponding to each $j$, \ $1\leq j \leq s$, let us divide $2^{k-2}$ patterns into $2^{(p_1...p_{j-1}p_{j+1}...p_s)n-1}$ groups. In each group there are exactly $2^{k-[(p_1...p_{j-1}p_{j+1}...p_s)n+1]}$ patterns such that arithmetic progressions with common difference $p_j$
are same.
\par Let $j$ be fixed. For every $1\leq i \leq 2^{k-2}$, there exists 
$1\leq m \leq 2^{(p_1...p_{j-1}p_{j+1}...p_s)n-1}$ and $1\leq n \leq 2^{k-[(p_1...p_{j-1}p_{j+1}...p_s)n+1]}$ such that $e(i, k)=e(m_n, k)_{j}$. Therefore, 
$$\sum\limits_{n=1} ^{2^{k-[(p_1...p_{j-1}p_{j+1}...p_s)n+1]}}[(\frac{p}{2^k})+ 
e(m_n,k)_{j}]=\\ (\frac{p}{2^{(p_1...p_{j-1}p_{j+1}...p_s)n+1}})
+ e(m, [(p_1...p_{j-1}p_{j+1}...p_s)n+1]).$$
Let
\begin{equation}\label{1}
\underbrace{[(\frac{p}{2^k})+r_1]+[(\frac{p}{2^k})+r_2]+...
	+[(\frac{p}{2^k})+r_n]}_{\text{$2^{k[(p_1...p_{j-1}p_{j+1}...p_s)n+1]}$ terms}}=
(\frac{p}{2^{(p_1...p_{j-1}p_{j+1}...p_s)n+1}}),
\end{equation}
where, $r_i$ can choose either 0, 1 or -1 and $r_i$ need not be equal to $r_j$ for every $1 \leq i \leq j \leq 2^{k-[(p_1...p_{j-1}p_{j+1}...p_s)n+1]}$.
Hence, for a fixed $j, \ m$, 
\begin{equation}\label{2}
\sum\limits_{n=1}^{2^{k-[(p_1...p_{j-1}p_{j+1}...p_s)n+1]}} 
[e(m_n, k)_{j}+r_n]=
e(m, ((p_1...p_{j-1}p_{j+1}...p_s)n+1)).
\end{equation}
One can also see that, patterns shuffles across different groups for diffferent $j$. Therefore, if $s$ is large enough then $e(i, k)$ can't be choosen randomly. Hence, if $e(i, k)$ is nonzero then for every $i$, $k$ it must be defined by a rule.
\end{proof}	
%\par If $s$ is $1$ then there are arithmetic progressions 
%with common difference $p_1$.
%One can choose nonzero integers 
%for $\alpha(i_{r_1}k)$, $1\leq i \leq 2^{k-(n+1)}$,  
%$1\leq r_1 \leq 2^{n-1}$ in so many ways such that 
%(\ref{2}) holds. That is,
%\begin{align*} 
%\alpha(1_{1_1}k)+\alpha(2_{1_1}k)+\alpha(3_{1_1}k)+...+ 
%\alpha(2^{k-(n+1)}_{1_1k}) &=  \alpha(1_1(n+1))\\ 
%\alpha(1_{2_1}k)+\alpha(2_{2_1}k)+\alpha(3_{2_1}k)+...+ 
%\alpha(2^{k-(n+1)}_{2_1k}) &=  \alpha(2_1(n+1))\\ 
%\ddots\\
%\alpha(1_{2^{(n-1)}_1}k)+\alpha(2_{2^{(n-1)}_1}k)+\alpha(3_{2^{(n-1)}_1}%k)+...+ 
%\alpha(2^{k-(n+1)}_{2^{(n-1)}_1k}) &=  \alpha(2^{(n-1)}_1(n+1)).
%\end{align*}
%If $s$ is more than $1$ then there are many 
%arithmetic progessions with different 
%common differences. Corresponding to each common differences, same 
%$\alpha(i_{r_1}k)'$s distributes across different groups and 
%has to satisfy (\ref{2}).

%Therefore, to achieve this, we cannot define $\alpha(i_{r_1}k)$
%randomly. Hence, in general, $\alpha(ik)$ must be a function
%of $i$ and $k$.
\begin{remark}
We know that, $(\frac{p}{2^k})$ is at most 1 as $k$ tends to $\lceil \log_2p\rceil$. Hence for every $1\leq i \leq 2^{k-2}$, $e(i, k)$ must be greater than $-1$.
\end{remark}
Now, we prove the following.
\begin{lemma}
As $k$ tends to $\lceil \log_2p\rceil$ for some $i$ if $e(i,k)$ is any arbitrary positive function of $p$ then there exists $l \neq i$ such that $e(l, k)$ is a negative function of $p$. 
\end{lemma}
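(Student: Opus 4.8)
The plan is to read the conclusion off the zero--sum identity \eqref{12}, namely $\sum_{i=1}^{2^{k-2}} e(i,k)=0$, which holds for the type~1 patterns at every length $k$ and in particular in the regime $k\to\lceil\log_2 p\rceil$. First I would fix the index $i$ at which, by hypothesis, $e(i,k)$ is a positive function of $p$, and isolate this term in \eqref{12} to obtain
\begin{equation*}
\sum_{l\neq i} e(l,k) = -\,e(i,k).
\end{equation*}
Since the right--hand side is strictly negative, the sum of the remaining integers $e(l,k)$ is negative, and a sum of integers can be negative only if at least one summand is negative. Hence there exists $l\neq i$ with $e(l,k)<0$. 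This already secures the existence of a negative error term, which is the combinatorial heart of the statement.

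To match the exact phrasing and produce an index $l$ for which $e(l,k)$ is a genuine \emph{negative function of $p$} rather than a bounded negative integer, I would invoke the structural constraint of Lemma~3.2. There the errors are not free but obey the balancing relations \eqref{2} across the arithmetic progressions with common differences $p_1,\dots,p_s$, in which the admissible corrections $r_n$ are confined to $\{-1,0,1\}$. Substituting a positive function of $p$ on the left of \eqref{2} forces the coarser--scale error $e\bigl(m,(p_1\cdots p_{j-1}p_{j+1}\cdots p_s)n+1\bigr)$ to inherit the same order of growth; iterating the relation down the tower of scales, together with the range restriction of Remark~3.3, keeps the excess from being absorbed by the bounded corrections alone, so some index must carry a negative error of the same order as $e(i,k)$ itself.

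The hard part will be precisely this last step: excluding the scenario in which the growing positive excess $e(i,k)$ is diffused across the $\sim 2^{k-2}$ remaining indices as many individually bounded negative values, none of which is itself a function of $p$. Ruling this out is where I expect the real work to lie, and it should rest on the observation in Lemma~3.2 that the patterns are reshuffled among the groups as $j$ varies, so that a single coarse--scale error cannot be spread thinly across all classes simultaneously. Combining this shuffling with the integrality of the $e(l,k)$ and the bound of Remark~3.3 should force the negative excess to concentrate at one index, making it a negative function of $p$ and completing the argument.
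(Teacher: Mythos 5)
Your proposal follows essentially the same route as the paper's own proof: the zero-sum identity \eqref{12} forces negative error terms, Remark 3.3 caps each of them below at $-1$ so the offsetting negative mass must spread over $f(p)$ many indices, and the ``governed by a rule'' constraint of Lemma 3.2 is invoked to rule out that diffusion. The concentration step you flag as the hard part is exactly the point where the paper itself offers only the bare assertion that ``there exists no rule which satisfy the above condition,'' so your sketch is no less complete than the paper's own argument.
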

\begin{proof}
As $k$ tends to $\lceil \log_2p\rceil$, suppose that, $e(i,k)$ be any arbitrary positive function of $p$, say $f(p)$. Then from (3.2) and Remark 3.3 there exist $j_1, j_2,..., j_{f(p)}$ such that $e(j_r, k)=-1$
for every $1 \leq r \leq f(p)$.
\par But according to Lemma 3.2, $e(i, k)$ can't be choosen randomly. It must be governed by a rule and there exists no rule which satisfy the above condition. Therefore, there exists $l \neq i$ such that $e(l, k)$ must be a negative function of $p$.
\end{proof}	
From  Lemma 3.1, Lemma 3.2 and Lemma 3.4 we have the following.	
\begin{theorem}
If $p$ is sufficiently large then there exists a constant $c$ such that the number of occurences 
of patterns of length $k$ as $k$ tends to $\lceil \log_2p\rceil$
at most c.
\end{theorem}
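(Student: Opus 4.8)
The plan is to turn the occurrence count into a character sum, isolate the main term $(\tfrac{p}{2^k})$, and then argue that the integer deviations $e(i,k)$ introduced before \eqref{11} are forced to vanish once $k=\lceil\log_2 p\rceil$. Writing $\chi$ for the Legendre symbol modulo $p$, the number of occurrences of a fixed type-1 pattern $\varepsilon=(\varepsilon_0,\dots,\varepsilon_{k-1})\in\{\pm1\}^k$ with common difference $1$ is $\frac{1}{2^k}\sum_{a}\prod_{j=0}^{k-1}\bigl(1+\varepsilon_j\chi(a+j)\bigr)$. Expanding the product exhibits the main term $p/2^k$ plus a linear combination of Weil sums $\sum_a\chi\!\bigl(\prod_{j\in S}(a+j)\bigr)$ over nonempty $S$. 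Summing this identity over all $2^{k-2}$ type-1 patterns leaves only the number of $a$ for which both $a$ and $a+(k-1)$ are residues, which by Note 2.2 equals $\tfrac{p-3}{4}$; this is exactly the mean-zero relation \eqref{12}, $\sum_i e(i,k)=0$. By Note 2.4, controlling types 1 and 2 controls types 3 and 4, so the whole theorem reduces to bounding the $e(i,k)$ for a single type.

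The finish I would aim for is purely arithmetic. Since each count is a nonnegative integer, Remark 3.3 gives the \emph{one-sided} bound $e(i,k)\ge 0$ once $(\tfrac{p}{2^k})\le 1$ (Lemma 3.1). A sum of nonnegative integers that vanishes must vanish termwise, so \eqref{12} would then force $e(i,k)=0$ for every $i$. Consequently every type-1 pattern would occur exactly $(\tfrac{p}{2^k})\le 1$ times as $k\to\lceil\log_2 p\rceil$, and Note 2.4 together with the type-2 analogue would give the same bound for all four types, yielding the theorem with $c=1$.

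The entire argument rests on the strict inequality in Remark 3.3, and this is where I expect the proof to break rather than close. Nonnegativity of the count $(\tfrac{p}{2^k})+e(i,k)\ge 0$ only yields $e(i,k)\ge-(\tfrac{p}{2^k})$, and since $(\tfrac{p}{2^k})$ may be the \emph{ceiling} $1$, the correct conclusion is $e(i,k)\ge -1$, not $e(i,k)>-1$. With $e(i,k)\ge -1$ the relation \eqref{12} no longer collapses: it is consistent with many indices having $e(i,k)=-1$ balanced by others with large positive $e(i,k)$, so it controls only the \emph{average} deviation, never the maximum. Upgrading $e(i,k)\ge -1$ to $e(i,k)\ge 0$ amounts to proving that \emph{no} type-1 pattern of length $k\approx\log_2 p$ is absent, with a quantitative lower bound of $\lceil p/2^k\rceil$ on each count. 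But the only available estimate for the error terms is Weil's bound, of size $\asymp\sqrt p\,\log p$, which dwarfs the main term $p/2^k\asymp 1$; ruling out a single missing pattern here would require cancellation far beyond the square-root barrier, uniformly over all $2^k\approx p$ patterns. I do not see how to supply this, and I regard it as the genuine obstacle.

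Finally, I would want to reconcile the target with known lower bounds before investing in the route above. Applied to the all-residue pattern with common difference $1$, a uniform constant bound forces $n(p)\le\lceil\log_2 p\rceil+c$: if $n(p)=m$ then $1,\dots,m-1$ are residues, so that pattern occurs at least $m-k$ times, whence $m\le k+c$. This is incompatible with the sharp form of the Graham--Ringrose bound, $n(p)\gg(\log p)(\log\log\log p)$ for infinitely many $p$, which makes $n(p)/\log p$ unbounded. Accordingly I expect the one-sided bound $e(i,k)\ge 0$ to fail for the all-$r$ pattern along a subsequence of primes, and any correct treatment either to establish new power-saving estimates for short character sums or to weaken the conclusion from a fixed constant to a slowly growing bound.
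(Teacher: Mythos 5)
Your own verdict---that this route cannot be closed---is the correct one, and the obstruction you isolate is the genuine one. Nonnegativity of an occurrence count $(\frac{p}{2^k})+e(i,k)\ge 0$ yields only $e(i,k)\ge -1$ once $(\frac{p}{2^k})=1$, never $e(i,k)\ge 0$; the identity (3.2) and its refinements over subprogressions, (3.3)--(3.4), are averaging relations, and all of them are satisfied by the balancing configuration you describe: one pattern with a large positive deviation $f(p)$ compensated by $f(p)$ patterns that simply never occur. No estimate of Weil strength can exclude that configuration, because the error term $\asymp\sqrt{p}$ swamps the $O(1)$ main term $p/2^k$ when $k$ is near $\lceil\log_2 p\rceil$. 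Your character-sum setup (including the check that summing over all type-1 patterns recovers $\frac{p-3}{4}$ via Note 2.2) is sound; it is the upgrade from the average bound to a pointwise bound that is unavailable.

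What you could not have seen blind is that the paper's own ``proof'' of Theorem 3.5 is precisely the route you dismantled, with the missing step asserted rather than proved. The proof of Theorem 3.5 is the single sentence citing Lemmas 3.1, 3.2 and 3.4. Remark 3.3 asserts the strict bound $e(i,k)>-1$ exactly where you predicted the break, justified only by $(\frac{p}{2^k})\le 1$, which gives $e(i,k)\ge -1$ and no more. Lemma 3.4 is the paper's attempt to rule out the balancing configuration: its proof concedes that (3.2) together with the floor $-1$ permits $f(p)$ indices with $e(j_r,k)=-1$, and then dismisses this by appeal to Lemma 3.2 --- ``$e(i,k)$ can't be choosen randomly \dots\ there exists no rule which satisfy the above condition.'' But Lemma 3.2 (``if $e(i,k)$ is nonzero then it must be goverened by a rule'') is not a mathematical statement, since ``rule'' is never defined, and the claim that no ``rule'' produces the balancing configuration is bare assertion; the grouping identities (3.3)--(3.4) on which it rests are again only averages and exclude nothing. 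So the paper contains no proof of the theorem, and the gap is exactly the one you identified as requiring cancellation beyond the square-root barrier. Your final observation settles the matter: in its sharp form the Graham--Ringrose theorem gives $n(p)\gg(\log p)(\log\log\log p)$ along an infinite sequence of primes, and nothing in their construction avoids the class $3\pmod 4$; hence Corollary 3.6, and with it Theorem 3.5, is false as stated, so the missing step cannot be supplied by any argument.
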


%\begin{remark}
%For $k>\lceil \log_2p\rceil$, the number of occurences of patterns is %either 
%0 or 1. Consider a pattern of length $k+1$, where all $k+1$ consecutive %integers are residues.
%Let the number of occurences be 1 then the number of occurences of %patterns of length $k+1$ where, the first 
%$k$ consecutive integers are residues and followed by a nonresidue be %0. But, this is impossible.
%\end{remark}	
For $p\equiv 3\pmod 4$, from the above theorem, 
we have the following.
\begin{cor}
If $p$ is sufficiently large then $n(p) \leq c\lceil \log_2 p\rceil$.
\end{cor}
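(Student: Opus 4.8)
The plan is to derive the corollary from Theorem 3.5 by a short counting argument that converts information about the least nonresidue into information about the number of occurrences of a single, explicit pattern. The bridge is the observation that $1$ is always a quadratic residue modulo every prime, since $1=1^2$. Consequently, if $n(p)>1$, then the integers $1,2,\ldots,n(p)-1$ are all quadratic residues, i.e.\ they form a block of $n(p)-1$ consecutive quadratic residues in $\mathbb{Z}_p$.

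First I would fix $k=\lceil\log_2 p\rceil$ and focus on the single pattern of type $1$ consisting entirely of quadratic residues, call it $R_k=\underbrace{rr\cdots r}_{k}$. The point is that a long initial run of residues forces $R_k$ to occur many times: within the block $\{1,2,\ldots,n(p)-1\}$, the window $a,a+1,\ldots,a+k-1$ realises the pattern $R_k$ for every starting position $a$ with $1\le a\le n(p)-k$. Hence, provided $n(p)>k$, the pattern $R_k$ occurs at least $n(p)-k$ times in $\mathbb{Z}_p$.

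Next I would invoke Theorem 3.5. Since $R_k$ is a pattern of type $1$ and length $k=\lceil\log_2 p\rceil$, for all sufficiently large $p$ its number of occurrences is bounded by the absolute constant $c$ supplied by that theorem. Combining with the lower bound from the previous step gives $n(p)-k\le c$, that is,
\[
n(p)\le c+\lceil\log_2 p\rceil.
\]
Since $\lceil\log_2 p\rceil\ge 1$, the right-hand side is at most $(c+1)\lceil\log_2 p\rceil$, and after renaming the constant we obtain $n(p)\le c'\lceil\log_2 p\rceil$, as claimed. The remaining case $n(p)\le k$ is immediate, because then $n(p)\le\lceil\log_2 p\rceil$ already satisfies the desired bound.

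The genuine obstacle does not lie in this reduction, which is essentially a one-line consequence once the all-residue pattern is singled out; it lies in Theorem 3.5 itself. Within the reduction, the only points requiring care are book-keeping ones: ensuring that the $n(p)-k$ windows I count inside the initial block are genuinely among the at most $c$ occurrences of $R_k$ over all of $\mathbb{Z}_p$ (so that there is neither double counting nor a wrap-around issue, which is harmless here since these windows lie well inside $\{1,\ldots,p-1\}$ when $n(p)$ is of size $O(\log p)$), and confirming that Theorem 3.5 is applied at exactly the length $k=\lceil\log_2 p\rceil$ with a constant $c$ independent of $p$. Provided the constant furnished by Theorem 3.5 is truly uniform in $p$, the corollary follows.
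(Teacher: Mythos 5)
Your derivation is correct and follows the paper's own route: the paper states this corollary without any explicit proof, as an immediate consequence of Theorem 3.5, and your counting of the $n(p)-k$ windows realising the all-residue pattern inside the initial block $\{1,\dots,n(p)-1\}$ (plus the trivial case $n(p)\le k$) is precisely the natural way to make that implicit step rigorous. Your closing caveat is also apt: the entire burden rests on Theorem 3.5 and its claimed uniform constant, not on this reduction.
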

The above corollary proves beyond Vinogradov's conjecture.

\end{document}